\documentclass[reqno]{amsart}
\usepackage{fourier}
\usepackage{pdfsync}
\usepackage{amsfonts,amssymb,pseudocode}
\usepackage{verbatim,graphicx,enumerate,xspace,url}
\usepackage[colorlinks=true, pdfstartview=FitV, linkcolor=blue,
 citecolor=blue, urlcolor=red]{hyperref}

\usepackage{xr-hyper}
\usepackage{hyperref}

\renewcommand{\P}{{\mathbf P}}

\newcommand{\tmix}{t_{{\rm mix}}}

\newcommand{\ep}{\varepsilon}

\newcommand{\deq}{\stackrel{{\rm def}}{=}}

\newtheorem{thm}{Theorem}

\newtheorem{prop}[thm]{Proposition}

\newtheorem{lem}[thm]{Lemma}

\newtheorem*{thm*}{Theorem}
\newtheorem*{cor*}{Corollary}

\theoremstyle{definition}

\theoremstyle{remark}
\newtheorem{rmk}{Remark}
\newtheorem*{rmk*}{Remark}

\newtheorem*{fact*}{Fact}
\newtheorem*{not*}{Notation}
\newtheorem*{claim*}{Claim}

\title{Estimating the Spectral Gap of a Reversible Markov
  Chain\\ from a Short Trajectory} 
\author{David A.\ Levin}
 \address{Department of Mathematics, Fenton Hall, University of Oregon 1222,
 Eugene, OR, 97403-1222.}
 \author{Yuval Peres}
 \address{Microsoft Research, 1 Microsoft Way, Redmond WA 98052}
\subjclass{62M05, 60J10}
\keywords{Markov chains, spectral gap, estimation}
\begin{document}

\begin{abstract}
  The spectral gap $\gamma$ of an ergodic and reversible Markov chain 
  is an important parameter measuring the asymptotic rate of
  convergence. 
  In applications, the transition matrix $P$ may be unknown, yet one
  sample of the chain up to a fixed time $t$ may be observed.
  Hsu, Kontorovich, and Szepesvari \cite{HKS}
  considered the problem of estimating $\gamma$ from this data.
  Let $\pi$ be the stationary distribution of $P$, and
  $\pi_\star = \min_x \pi(x)$.  They 
  showed that, if $t = \tilde{O}\bigl(\frac{1}{\gamma^3 \pi_\star}\bigr)$, then
  $\gamma$ can be estimated to within multiplicative constants with
  high probability.  
  They also proved that $\tilde{\Omega}\bigl(\frac{n}{\gamma}\bigr)$
  steps are required for precise estimation of $\gamma$.  We show that $\tilde{O}\bigl(\frac{1}{\gamma \pi_\star}\bigr)$ steps of the chain
  suffice  to estimate
  $\gamma$ up to multiplicative constants with high probability.
  When $\pi$ is uniform, this matches (up to logarithmic corrections)
  the lower bound in \cite{HKS}.   
\end{abstract}
\maketitle

\section{Introduction}

Consider an ergodic and reversible Markov chain $\{X_s\}$ on a finite
state space of size $n$, with transition matrix $P$ and 
stationary distribution $\pi$.  We will assume that $P$ is positive
definite, to avoid complications arising from eigenvalues close to
$-1$.   The $\emph{spectral gap}$ of
the chain is $\gamma = 1 - \lambda_2$, where $\lambda_2$ is the second
largest eigenvalue of $P$.  The spectral gap is an
important parameter of intrinsic interest, as it governs the asymptotic rate of
convergence to stationarity. 



Suppose one does not know $P$, but is able to observe the chain
$\{X_s\}_{s=1}^t$ for $t$ steps.  Can one estimate $\gamma$ with precision from this data?    This question was
 studied by Hsu, Kantorovich, and Szepesvari in \cite{HKS}.   Their
 estimator is the spectral gap of the (suitably symmetrized)
 empirical transition matrix.   
They show that $t = \tilde{O}\bigl(\frac{1}{\pi_\star \gamma^3}\bigr)$
observations of the chain are enough to estimate $\gamma$ to within
a constant factor.\footnote{The notation $f = \tilde{O}(g)$
means that there is a universal constant $C$ and a polylog function $L$ of the
parameters such that $f \leq C \cdot L \cdot g$.  Similarly, $f = \tilde{\Omega}(g)$ means that
there is a universal constant $c$ and a polylog function $\ell$ of
the parameters such that $f \geq c \cdot \ell \cdot g$.}   See Theorem \ref{Thm:HKS} for a precise statement.
In the case where $\pi$ is uniform, the authors of \cite{HKS} also show that
$\tilde{\Omega}(\frac{1}{\pi_\star \gamma})$ steps are needed
to estimate $\gamma$. 
Here we show that $t = \tilde{O}(\frac{1}{\pi_\star\gamma} )$ is a sufficient 
number of observations to estimate $\gamma$ to within a 
constant factor.   In particular, we prove:

\begin{thm} \label{thm:simplemain}
  
  Fix $\delta > 0$.  There is an estimator $\tilde{\gamma}$ of $\gamma$ based on
  $\{X_s\}_{s=0}^t$, and 
  a polynomial function ${\mathcal L}$ of the
  logarithms of $\gamma^{-1},\pi_\star^{-1},\delta^{-1}$, and $n$,
  such that, if $t > \frac{1}{\pi_\star \gamma \ep^2} {\mathcal L}$,
  then we have 
  $\Bigl|\frac{\tilde{\gamma}}{\gamma} - 1\Bigr| < \ep$
  with probability at least $1-\delta$.
   
\end{thm}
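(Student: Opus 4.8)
I would estimate $\gamma$ by the spectral gap of the symmetrized empirical transition matrix, as in \cite{HKS}, but extract $\gamma$ through a soft Rayleigh‑quotient comparison rather than through operator‑norm perturbation bounds. From the trajectory form $N(x,y)=\#\{s<t\st X_s=x,\,X_{s+1}=y\}$, $N(x)=\sum_y N(x,y)$, $\hat\pi(x)=N(x)/t$, and the symmetric edge weights $\hat w(x,y)=\tfrac{1}{2t}\bigl(N(x,y)+N(y,x)\bigr)$; set $\hat P_{\mathrm{sym}}(x,y)=\hat w(x,y)/\hat\pi(x)$, which is reversible with respect to $\hat\pi$, and take $\tilde\gamma=1-\hat\lambda_2$, the spectral gap of $\hat P_{\mathrm{sym}}$. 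The point of this choice is that the empirical Dirichlet form telescopes, $\langle f,(I-\hat P_{\mathrm{sym}})f\rangle_{\hat\pi}=\tfrac1{2t}\sum_{s}\bigl(f(X_{s+1})-f(X_s)\bigr)^2$, so that $\tilde\gamma=\min_{f\perp_{\hat\pi}\one}\tfrac1{2t}\sum_s(f(X_{s+1})-f(X_s))^2/\|f\|_{\hat\pi}^2$, the exact empirical analogue of $\gamma=\min_{f\perp_\pi\one}\tfrac12\sum_{x,y}w(x,y)(f(x)-f(y))^2/\|f\|_\pi^2$ with $w(x,y)=\pi(x)P(x,y)$.

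The soft step is then the following: $\tilde\gamma$ and $\gamma$ agree within a factor $1\pm O(\ep)$ provided (a) $\hat\pi(x)=(1\pm\ep)\pi(x)$ for every $x$, and (b) $\hat w(x,y)=(1\pm\ep)w(x,y)$ for every pair $(x,y)$ that carries a non‑negligible share of the Dirichlet energy. Indeed, under (a)–(b) the numerator of every Rayleigh quotient is a non‑negative combination of the $(f(x)-f(y))^2$ with coefficients distorted by at most $1\pm O(\ep)$, while $\var_{\hat\pi}(f)=(1\pm O(\ep))\var_\pi(f)$ and the change of the orthogonality constraint costs only $O(\ep)$; hence the minima agree within $1\pm O(\ep)$, and a final rescaling of $\ep$ by an absolute constant yields $|\tilde\gamma/\gamma-1|<\ep$. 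Crucially this argument uses no spectral gap between $\lambda_2$ and $\lambda_3$ and no eigenvector‑stability estimate — which is exactly what allows one to dispense with the extra powers of $\gamma$ present in \cite{HKS}. It also sidesteps the ``bias from subdominant eigenvalues'' issue entirely: we compute the gap of an $n\times n$ matrix directly rather than reading off a decay rate, so no large time lags are involved.

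It remains to establish (a) and (b) from $t\gtrsim \pi_\star^{-1}\gamma^{-1}\ep^{-2}\mathcal L$. For (a), $\hat\pi(x)$ is the ergodic average of $\one_{\{X_s=x\}}$, whose asymptotic variance is $O(\pi(x)/\gamma)$; combining the spectral bound on this variance with a Bernstein‑type tail inequality for reversible chains and a union bound over $x$ (and over $\delta$) gives $\hat\pi(x)=(1\pm\ep)\pi(x)$ for all $x$ with probability $1-\delta/2$ once $t\gtrsim\pi_\star^{-1}\gamma^{-1}\ep^{-2}\mathcal L$. For (b), the ``heavy'' part is handled the same way: $\hat w(x,y)$ is an ergodic average of a pair‑chain indicator, with asymptotic variance $O(w(x,y)/\gamma)$ (and, using that the transitions out of a fixed state are conditionally i.i.d.\ by the strong Markov property, part of this fluctuation carries no $\gamma$), so $\hat w(x,y)=(1\pm\ep)w(x,y)$ once $t\gtrsim w(x,y)^{-1}\gamma^{-1}\ep^{-2}\mathcal L$.

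The main obstacle, and the technical heart of the matter, is that $w(x,y)$ can be far smaller than $\pi_\star$ (bottleneck edges may have weight as small as $\pi_\star\gamma/n^2$), so a uniform multiplicative control on all $\hat w(x,y)$ is far too expensive and, treated naively, reintroduces both a factor $n$ and an extra $\gamma^{-1}$. The plan is to show these light pairs may be discarded — replacing their empirical weights by anything comparably small — without moving the minimal Rayleigh quotient by more than $1\pm\ep$: one would bound the aggregate Dirichlet energy that pairs with $w(x,y)$ below a suitable threshold can contribute to a near‑minimizer $f$ (for which $\|f\|_\pi=1$, so $|f(x)-f(y)|\le 2/\sqrt{\pi_\star}$ pointwise and $\sum_{x,y}w(x,y)(f(x)-f(y))^2\le 2\gamma+O(\ep\gamma)$), combining the pointwise bound with this global energy budget and with a sparsity count of the light pairs, and — the delicate part — arguing that this aggregate error is controlled by the flow across cuts (an aggregate quantity that concentrates much better than individual edge weights) rather than by single edges, so that the threshold can be taken of order $\pi_\star\gamma$ up to polylogarithmic factors. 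I expect this light‑edge control, keeping it both $n$‑free and within an extra single (not squared) factor of $\gamma^{-1}$ — possibly via a mild regularization of $\hat P_{\mathrm{sym}}$ — to be where reversibility and the precise combinatorics of the traversal counts must be used most carefully, and to be the step that separates the $\tilde O(\pi_\star^{-1}\gamma^{-1})$ bound from the easier $\tilde O(\pi_\star^{-1}\gamma^{-2})$ or $\tilde O(\pi_\star^{-1}\gamma^{-3})$ ones.
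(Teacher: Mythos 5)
Your route is genuinely different from the paper's, but it contains a real gap at exactly the step you yourself flag as ``the technical heart of the matter'': the treatment of light edges is a plan, not a proof. Your own accounting shows why the naive version fails: discarding all pairs with $w(x,y)\le\tau$ can perturb the Dirichlet form of a normalized near-minimizer by up to $4\tau n^{2}/\pi_\star$, so one is forced to take $\tau\lesssim \ep\gamma\pi_\star/n^{2}$, and multiplicative control of the surviving edges then costs $t\gtrsim n^{2}/(\pi_\star\gamma^{2})$ --- worse than the bound you are trying to prove. The proposed rescue (``the aggregate error is controlled by the flow across cuts'') is not carried out, and it faces a structural obstacle: the lower bound $\tilde\gamma\ge(1-O(\ep))\gamma$ requires the empirical Dirichlet form to dominate the true one \emph{uniformly over all} test functions $f\perp_{\hat\pi}\one$, not merely over cut indicators, and a near-minimizer of the empirical quotient can concentrate its energy on whichever edges happen to be under-sampled. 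This uniform control is precisely the step that costs \cite{HKS} their extra powers of $\gamma$ (their estimator is essentially the $\hat P_{\mathrm{sym}}$ you construct), so the central claim that the soft Rayleigh-quotient comparison ``dispenses with the extra powers of $\gamma$'' is asserted rather than established. A secondary unaddressed point in the same direction: even when the true light edges carry negligible energy, their \emph{empirical} weights need not --- a single traversal contributes $\hat w(x,y)=1/(2t)\gg w(x,y)$ --- so the comparison must also be argued for the empirical form evaluated at the true minimizer, again uniformly enough to survive the optimization.

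For contrast, the paper avoids this difficulty entirely rather than overcoming it: it applies the HKS estimator as a black box to the skipped chain $\{X_{As}\}$, doubling $A$ until the estimated gap of $P^{A}$ exceeds an absolute constant (about $0.31$). Once $\gamma_A$ is bounded below by a constant, the $\tilde O\bigl(1/(\pi_\star\gamma_A^{3})\bigr)$ sample requirement of Theorem \ref{Thm:HKS} becomes $\tilde O(1/\pi_\star)$ steps of the skipped chain, i.e.\ $\tilde O(A/\pi_\star)=\tilde O\bigl(1/(\pi_\star\gamma)\bigr)$ steps of the original chain; the estimate is then pulled back through $\gamma=1-(1-\gamma_A)^{1/A}$, whose logarithmic derivative is bounded by an absolute constant on the relevant interval. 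To make your approach work you would need a new uniform concentration statement for the empirical Dirichlet form at sample size $\tilde O\bigl(1/(\pi_\star\gamma)\bigr)$; as it stands, that statement is the theorem, not a lemma.
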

The definition of ${\mathcal L}$ is in
\eqref{Eq:LDef}.

The proof of Theorem~\ref{thm:simplemain} applies the estimator of
Hsu, Kontorovich,  and Szepesvari to estimate the gap $\gamma_A$ of the
``skipped chain'' $\{X_{As}\}_{s=1}^{t/A}$.   By successively doubling
$A$,  with high probability one
can identify the first value $A$ such that $\gamma_A$ is uniformly
bounded below.   Once this $A$ is identified, the estimate of
$\gamma_A$ can be transformed to an estimate of $\gamma$.

While $\gamma$ is a parameters of intrinsic
interest, it is also related to another important parameter,
the \emph{mixing time}.  The mixing time 
$\tmix(\ep)$ is the first time such that (from every starting state)
the distribution of the chain is within $\ep$ from $\pi$ in
total-variation.
Always $\gamma^{-1} \leq \tmix(1/4) + 1$, however, if $\pi_\star = \min_x \pi(x)$,
then  $\tmix(\ep) \leq |\log(\ep \pi_\star)|\cdot\gamma^{-1}$.
See \cite{LPW} for background on the spectral gap and the
mixing time.

\section{Proof of Theorem \ref{thm:simplemain}}

We will repeatedly apply the following estimate of Hsu, 
Kontorovich, and Szepesvari:
\begin{thm}[{\cite{HKS}}] \label{Thm:HKS}
  There is an estimator $\hat{\gamma}$ of $\gamma$, based on
  $t$-steps of the Markov chain, such that for some absolute constant
  $C$, with probability at least $1-\delta$,
\begin{equation}\label{Eq:HKS}
  |\hat{\gamma} - \gamma|
  \leq C \Biggl( \sqrt{ \frac{\log\bigl( \frac{n}{\delta} \bigr) \cdot
      \log\bigl( \frac{t}{\pi_\star \delta} \bigr)}{\pi_\star \gamma t }} +
      \frac{\log(1/\gamma)}{\gamma t } \Biggr)
  := M(t; \delta, \pi, \gamma)  \,. 
\end{equation}
\end{thm}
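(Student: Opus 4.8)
The plan is to reduce the estimation of $\lambda_2$ to estimating a symmetric matrix in operator norm, and then to invoke concentration for functionals of the trajectory. By reversibility the edge measure $L(x,y) \deq \pi(x)P(x,y)$ is symmetric, so with $D \deq \operatorname{diag}(\pi)$ the matrix
\begin{equation*}
  M \deq D^{-1/2} L\, D^{-1/2} = D^{1/2} P\, D^{-1/2}
\end{equation*}
is symmetric and similar to $P$; hence its eigenvalues are exactly $1=\lambda_1\ge\lambda_2\ge\cdots$, and $\lambda_2(M)=1-\gamma$. First I would define the estimator: from $\{X_s\}_{s=0}^t$ form the count matrix $N(x,y)=\sum_{s=1}^t \one\{X_{s-1}=x,\,X_s=y\}$, the symmetrized empirical edge measure $\hat L=\tfrac{1}{2t}(N+N^\top)$, the empirical marginal $\hat\pi(x)=\tfrac1t\sum_s\one\{X_s=x\}$ with $\hat D\deq\operatorname{diag}(\hat\pi)$, and set $\hat M\deq\hat D^{-1/2}\hat L\,\hat D^{-1/2}$, $\hat\gamma\deq 1-\lambda_2(\hat M)$. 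Since $M,\hat M$ are symmetric, Weyl's inequality gives
\begin{equation*}
  |\hat\gamma-\gamma| = |\lambda_2(\hat M)-\lambda_2(M)| \le \|\hat M - M\|_{\mathrm{op}},
\end{equation*}
so it suffices to bound $\|\hat M-M\|_{\mathrm{op}}$ by $M(t;\delta,\pi,\gamma)$ with probability $1-\delta$.

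Next I would separate the two sources of error by writing
\begin{equation*}
  \hat M - M = D^{-1/2}(\hat L - L)D^{-1/2} + \bigl(\hat D^{-1/2}\hat L\,\hat D^{-1/2} - D^{-1/2}\hat L\,D^{-1/2}\bigr),
\end{equation*}
calling the first summand the \emph{centering} term and the second the \emph{normalization} term. For the normalization term, writing $\eta_x\deq(\hat\pi(x)-\pi(x))/\pi(x)$ and $\Delta\deq\operatorname{diag}(\eta_x)$, a first-order expansion of $\hat D^{-1/2}=D^{-1/2}(I-\tfrac12\Delta+\cdots)$ gives a leading contribution $-\tfrac12(\Delta M+M\Delta)$, whose operator norm is at most $\max_x|\eta_x|\cdot\|M\|_{\mathrm{op}}=\max_x|\eta_x|$. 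Thus the whole problem reduces to high-probability control of $\|\hat L-L\|_{\mathrm{op}}$ and of the relative deviations $\max_x|\eta_x|$, each of which is an ergodic average along the trajectory.

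The crux is the concentration of these dependent averages, and this is the step I expect to be the main obstacle. For the relative deviation, the number of visits to $x$ concentrates around $t\pi(x)$ with fluctuation inflated by the temporal correlations of the chain: the asymptotic variance of an ergodic average is governed by the spectral gap, so $\operatorname{Var}(\hat\pi(x))\sim \pi(x)/(\gamma t)$ and hence $|\eta_x|\sim\sqrt{1/(\pi(x)\gamma t)}$. The \emph{rarest} state dominates, producing $\max_x|\eta_x|\sim\sqrt{\log(n/\delta)\log(t/(\pi_\star\delta))/(\pi_\star\gamma t)}$ after a union bound over the $n$ states and a maximal/self-bounding argument for the Markov dependence; this is precisely where the factor $\pi_\star^{-1}$ enters under the square root. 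The centering term, by contrast, is handled by a matrix Bernstein (or Freedman) inequality adapted to Markov chains: the per-step increments $D^{-1/2}(E_s-L)D^{-1/2}$ have matrix second moment $D^{-1/2}\E[E_sE_s^\top]D^{-1/2}=I$, so the stationary variance proxy is of order $(\gamma t)^{-1}$ once correlations are accounted for, giving the subordinate bound $\sqrt{\log(n/\delta)/(\gamma t)}$. I would obtain the $\gamma^{-1}$ inflation either directly from a spectral-gap matrix-concentration inequality or by the blocking device — partitioning $\{1,\dots,t\}$ into $\sim\gamma t$ blocks of length $\sim\gamma^{-1}$ that decorrelate after mixing within a block. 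The delicate points are keeping the dimension factor at $\log(n/\delta)$, recovering the sharp $(\pi_\star\gamma t)^{-1}$ scale rather than a crude $(\pi_\star t)^{-1}$, paying only the single extra logarithm $\log(t/(\pi_\star\delta))$, and guaranteeing $\hat\pi(x)\in[\tfrac12\pi(x),2\pi(x)]$ so that the linearization of $\hat D^{-1/2}$ is legitimate.

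Finally I would assemble the estimate by the triangle inequality. The dominant normalization fluctuation yields the leading term $\sqrt{\log(n/\delta)\log(t/(\pi_\star\delta))/(\pi_\star\gamma t)}$, while the lower-order term $\log(1/\gamma)/(\gamma t)$ absorbs the second-order contributions of the expansion of $\hat D^{-1/2}$ together with the burn-in cost of a non-stationary start: the chain is within $1/4$ of stationarity after $\tmix(1/4)\sim\gamma^{-1}\log(1/\gamma)$ steps, and the resulting $O(\tmix/t)$ bias feeds into both $\hat\pi$ and $\hat L$. Combining these with Weyl's inequality then gives $|\hat\gamma-\gamma|\le M(t;\delta,\pi,\gamma)$ with probability at least $1-\delta$.
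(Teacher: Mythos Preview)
The paper does not prove this statement at all: Theorem~\ref{Thm:HKS} is quoted from \cite{HKS} and used as a black box throughout (the introduction even summarizes the HKS estimator as ``the spectral gap of the (suitably symmetrized) empirical transition matrix''). There is therefore nothing in the present paper to compare your proposal against.

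That said, your sketch is a faithful outline of the argument in \cite{HKS}: the symmetrized plug-in $\hat M=\hat D^{-1/2}\hat L\,\hat D^{-1/2}$, Weyl's inequality to pass from operator-norm error to eigenvalue error, the decomposition into a centering term and a normalization term, and Markov-chain concentration (with the $\gamma^{-1}$ variance inflation and the $\pi_\star^{-1}$ coming from the rarest state) are exactly the ingredients used there. If you intend to include a self-contained proof, the points you flag as delicate are the right ones; in particular, the uniform event $\hat\pi(x)\in[\tfrac12\pi(x),2\pi(x)]$ must be established \emph{before} the linearization of $\hat D^{-1/2}$ is invoked, and the extra logarithm $\log(t/(\pi_\star\delta))$ arises from the self-bounding/Bernstein step rather than from the union bound. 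But for the purposes of this paper no proof is required or given.
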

Thus, Theorem \ref{Thm:HKS} says $t=\tilde{O}(\frac{1}{\pi_\star
  \gamma^3})$ steps suffice for $\hat{\gamma}/\gamma$ to be near $1$.


  We call the estimator $\hat{\gamma}$ the HKS
estimator.   Note that if 
\[
t_1 = t_1(\ep; \delta, \gamma) :=
\frac{1}{\pi_\star \gamma} \frac{12 C^2 \log(n/\delta) \log(12 C^2
    /(\ep^2 \pi_\star^2 \gamma \delta))}{ \ep^2} 
\]
then
\[
M(t_1; \delta, \pi, \gamma) 
\leq  \frac{\ep}{2} \sqrt{
\frac{ \log(\frac{12 C^2}{\ep^2 \pi_\star^2 \gamma \delta})
+ \log(\log(n/\delta)) + \log\log(\frac{12 C^2}{\ep^2 \pi_\star^2 \gamma
  \delta})}{
3\log( \frac{12 C^2}{\ep^2 \pi_\star^2 \gamma \delta})}}
+ \frac{\ep}{2} \leq \ep.
\]
(Each term in the numerator under the radical is at most a third of
the 
denominator.   We have used that $\pi_\star \leq 1/n$ in comparing the
second term in the numerator to the denominator.)

For $a > 0$, the gap of the chain with transition matrix $P^{a}$
is denoted by $\gamma_a$, and the HKS estimator of $\gamma_a$,
based on $t/a$ steps of $P^a$, is denoted by $\widehat{\gamma_a}$.
Note that
\[
\gamma_a = 1 - (1-\gamma)^{a}\,.
\]

Let $\delta_\gamma = \frac{\delta}{\lfloor \log_2(1/\gamma) \rfloor + 1}$.
\begin{prop} \label{Prop:gammaA}
  Fix $\delta > 0$ and $\ep < 0.01$.  If $t > t_1(\ep/2;
  \delta_\gamma, \gamma)$,
  then there is an integer-valued random variable $A$, based on $t$
  steps of the Markov chain, and an event $G(\ep)$ having probability at
  least $1-\delta$, such that on $G(\ep)$,
  \begin{align*}
    0.30 < \gamma_A  < 0.54 & \quad  \text{if } \gamma < 1/2\,,\\
    A = 1 & \quad \text{if } \gamma \geq 1/2 \,.
  \end{align*}
  Moreover, on $G$, the HKS estimator $\widehat{\gamma_A}$ applied
  to the chain $\{X_{As}\}_{s=0}^{t/A}$ satisfies
  \[
    | \widehat{\gamma_A} - \gamma_A | < \ep \,. 
  \]
\end{prop}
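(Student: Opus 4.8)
The plan is to construct $A$ by an explicit doubling procedure and to extract both conclusions from a single good event produced by a union bound over dyadic scales. For $j=0,1,2,\dots$ put $A_j=2^j$; note that the skipped chain with transition matrix $P^{A_j}$ is again an ergodic reversible positive-definite chain, with the same stationary distribution $\pi$ and hence the same $\pi_\star$ and $n$, so Theorem~\ref{Thm:HKS} applies to it with gap $\gamma_{A_j}=1-(1-\gamma)^{A_j}$. Run the HKS estimator on $\{X_{A_j s}\}_{s=0}^{\lfloor t/A_j\rfloor}$ to obtain $\widehat{\gamma_{A_j}}$, and let $A=A_j$ at the first $j$ with $\widehat{\gamma_{A_j}}$ exceeding a fixed threshold $\theta$ slightly above $0.30$ (for instance $\theta=0.31$; the slack between $\theta$ and the ends of $(0.30,0.54)$ is exactly what forces the hypothesis $\ep<0.01$). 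Set $J^\star=\lfloor\log_2(1/\gamma)\rfloor$, so that $\tfrac12<2^{J^\star}\gamma\le 1$, and let $G(\ep)$ be the event that $|\widehat{\gamma_{2^j}}-\gamma_{2^j}|<\ep$ for every $j\in\{0,1,\dots,J^\star\}$ (a genuine event, even though $J^\star$ depends on the unknown $\gamma$).

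Three points must then be checked. (i) \emph{On $G(\ep)$ the procedure stops by scale $2^{J^\star}$}: since $2^{J^\star}\gamma>\tfrac12$ we have $\gamma_{2^{J^\star}}\ge 1-e^{-2^{J^\star}\gamma}\ge 1-e^{-1/2}>0.39$, so $\widehat{\gamma_{2^{J^\star}}}>0.39-\ep>\theta$; and if $\gamma\ge\tfrac12$ then already $\widehat{\gamma_1}>\gamma-\ep\ge\tfrac12-\ep>\theta$, giving $A=1$. In particular, on $G(\ep)$ the procedure only ever evaluates estimators at scales controlled by $G(\ep)$, which makes the final assertion $|\widehat{\gamma_A}-\gamma_A|<\ep$ immediate. (ii) \emph{On $G(\ep)$, $\gamma_A\in(0.30,0.54)$ when $\gamma<\tfrac12$}: the stopping rule gives $\gamma_A>\widehat{\gamma_A}-\ep>\theta-\ep>0.30$; for the upper bound, if $A=1$ then $\gamma_A=\gamma<\tfrac12$, while if $A=2^j$ with $j\ge 1$ then minimality of $j$ gives $\widehat{\gamma_{2^{j-1}}}\le\theta$, hence $\gamma_{2^{j-1}}<\theta+\ep$, and the identity $\gamma_{2a}=\gamma_a(2-\gamma_a)$ with the monotonicity of $x\mapsto x(2-x)$ on $[0,1]$ yields $\gamma_A=\gamma_{2^{j-1}}(2-\gamma_{2^{j-1}})<(\theta+\ep)(2-\theta-\ep)<0.54$. (iii) \emph{$\P(G(\ep)^c)\le\delta$}: there are only $J^\star+1=\lfloor\log_2(1/\gamma)\rfloor+1$ scales, so by the union bound it suffices that the HKS estimate at each scale $j\le J^\star$ fails with probability at most $\delta_\gamma$, i.e.\ (by Theorem~\ref{Thm:HKS} and the estimate recorded just after it) that $\lfloor t/2^j\rfloor\ge t_1(\ep;\delta_\gamma,\gamma_{2^j})$ for every such $j$.

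The quantitative heart of the proof — and the step I expect to be the main obstacle — is this last inequality, because the sample budget $t/2^j$ for the skipped chain shrinks as $j$ grows. The saving feature is that for $j\le J^\star$ one is in the regime $2^j\gamma\le 1$, where $\gamma_{2^j}\ge 1-e^{-2^j\gamma}\ge 2^{j-1}\gamma$ grows essentially linearly in $2^j$; so the requirement $t_1(\ep;\delta_\gamma,\gamma_{2^j})$ also shrinks like $2^{-j}$ (using in addition $\gamma_{2^j}\ge\gamma$, which only decreases the logarithmic factors in $t_1$, and that $\pi_\star$ and $n$ are unchanged), giving $2^j\,t_1(\ep;\delta_\gamma,\gamma_{2^j})\le 2\,t_1(\ep;\delta_\gamma,\gamma)$ for all $j\le J^\star$. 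Since replacing $\ep$ by $\ep/2$ multiplies $t_1$ by at least $4$, the hypothesis $t>t_1(\ep/2;\delta_\gamma,\gamma)$ yields $t>2\,t_1(\ep;\delta_\gamma,\gamma)\ge 2^j\,t_1(\ep;\delta_\gamma,\gamma_{2^j})$, hence $t/2^j\ge t_1(\ep;\delta_\gamma,\gamma_{2^j})$ at every relevant scale, which — together with the monotonicity of $M$ in its time argument and the computation after Theorem~\ref{Thm:HKS} — gives $M(\lfloor t/2^j\rfloor;\delta_\gamma,\pi,\gamma_{2^j})\le\ep$, exactly what (iii) needs. The remaining items (the floors $\lfloor t/2^j\rfloor$, and verifying $M$ is decreasing over the range of $t$ that occurs) are routine.
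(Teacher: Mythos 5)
Your proposal is correct and follows essentially the same route as the paper: the same doubling procedure with threshold $0.31$, the same good event $\bigcap_{j\le\lfloor\log_2(1/\gamma)\rfloor}G(2^j;\ep)$ controlled by a union bound, the same key inequality $\gamma_a\ge a\gamma/2$ for $a\gamma\le 1$ (you get it from $1-e^{-x}\ge x/2$, the paper from a Taylor expansion), and the same arithmetic $1-(1-0.32)^2<0.54$ for the upper bound on $\gamma_A$. The only cosmetic difference is that you transfer the sample-size requirement through $t_1$ directly, whereas the paper's Lemma~\ref{Lem:agamma} compares the error bounds $M(t/a;\cdot)\le 2M(t;\cdot)$; these are the same estimate.
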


Define 
\begin{equation} \label{Eq:EstSucc}
  G(a; \ep) = \bigl\{ |\gamma_a - \widehat{\gamma_a}| < \ep \bigr\}\,.
\end{equation}
\begin{lem}  \label{Lem:agamma}
  Fix $t \geq t_1(\ep/2 ; \delta, \gamma)$.
  If $a\gamma \leq 1$, then $\P(G(a;\ep)) > 1 - \delta$.
\end{lem}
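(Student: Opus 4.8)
The plan is to apply Theorem~\ref{Thm:HKS} to the ``skipped chain'' $\{X_{as}\}_{s\ge0}$ and then check that the error bound it yields is at most $\ep$. First note that $\pi P=\pi$ forces $\pi P^{a}=\pi$, so $\pi$ is the stationary distribution of $P^{a}$ as well, with smallest atom still $\pi_\star$; moreover the eigenvalues of $P^{a}$ are the $a$-th powers of those of $P$, hence lie in $(0,1]$, so $P^{a}$ is positive definite, with spectral gap $\gamma_a=1-(1-\gamma)^a$. Running the HKS estimator on $\lfloor t/a\rfloor$ steps of this chain, Theorem~\ref{Thm:HKS} gives
\[
\bigl|\widehat{\gamma_a}-\gamma_a\bigr|\ \le\ M\bigl(\lfloor t/a\rfloor;\,\delta,\pi,\gamma_a\bigr)
\]
with probability at least $1-\delta$, so it remains only to bound the right-hand side by $\ep$.

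Since $M(\,\cdot\,;\delta,\pi,\gamma_a)$ is decreasing in $t$ (for $t\ge2$, say — all that occurs here), and since the computation displayed just after the definition of $t_1$ gives $M\bigl(t_1(\ep;\delta,\gamma_a);\delta,\pi,\gamma_a\bigr)\le\ep$ — that display is written for $\gamma$, but its verification uses only elementary estimates such as $\pi_\star\le1/n$, which are insensitive to replacing $\gamma$ by $\gamma_a$ — it suffices to prove $\lfloor t/a\rfloor\ge t_1(\ep;\delta,\gamma_a)$. For this I will use the elementary two-sided bound, valid whenever $a\ge1$ (which we may assume, since otherwise $\lfloor t/a\rfloor$ steps is no subsample of the observed trajectory) and $a\gamma\le1$:
\[
\tfrac12\,a\gamma\ \le\ \gamma_a\ \le\ a\gamma,
\]
the upper inequality being Bernoulli's $(1-\gamma)^a\ge1-a\gamma$ and the lower one following from $(1-\gamma)^a\le e^{-a\gamma}\le1-a\gamma+\tfrac12(a\gamma)^2\le1-\tfrac12a\gamma$; also $\gamma_a\ge\gamma_1=\gamma$ since $a\mapsto\gamma_a$ is increasing.

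Substituting $\gamma_a\ge\tfrac12a\gamma$ into the formula for $t_1$ yields $a/(\pi_\star\gamma_a)\le2/(\pi_\star\gamma)$, while $\gamma_a\ge\gamma$ keeps the logarithmic factor of $t_1(\ep;\delta,\gamma_a)$ no larger than $\log\bigl(12C^2/((\ep/2)^2\pi_\star^2\gamma\delta)\bigr)$; since the factor $1/(\ep/2)^2=4/\ep^2$ in $t_1(\ep/2;\delta,\gamma)$ more than absorbs the factor $2$ just lost, one obtains
\[
a\cdot t_1(\ep;\delta,\gamma_a)\ \le\ \tfrac12\,t_1(\ep/2;\delta,\gamma)\ \le\ \tfrac12\,t,
\]
the last step by the hypothesis $t\ge t_1(\ep/2;\delta,\gamma)$. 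Thus $t_1(\ep;\delta,\gamma_a)\le t/(2a)$, and since $\lfloor t/a\rfloor\ge t/(2a)$ whenever $t\ge a$ — which the hypothesis on $t$ comfortably gives, as $a\le1/\gamma$ — we conclude $\lfloor t/a\rfloor\ge t_1(\ep;\delta,\gamma_a)$, completing the argument.

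The argument is essentially bookkeeping; there is no probabilistic content beyond the black-box use of Theorem~\ref{Thm:HKS}. The one point needing care — and the reason the hypothesis is phrased with $t_1(\ep/2;\cdot)$ rather than $t_1(\ep;\cdot)$ — is that replacing $\gamma$ by $\gamma_a$ can inflate the leading $1/(\pi_\star\gamma_a)$ factor of $t_1$ by a factor as large as $2$ (but no larger, precisely because $\gamma_a\ge\tfrac12a\gamma$ uniformly over all $a$ with $a\gamma\le1$), and the halved error tolerance in the hypothesis is exactly what pays for this.
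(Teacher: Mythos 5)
Your proof is correct and follows essentially the same route as the paper: both hinge on the key inequality $\gamma_a \geq \tfrac12 a\gamma$ (valid under $a\gamma \leq 1$) and then reduce to the HKS bound for the skipped chain. The only difference is cosmetic bookkeeping --- you verify $\lfloor t/a\rfloor \geq t_1(\ep;\delta,\gamma_a)$ and invoke monotonicity of $M$ in $t$, whereas the paper directly bounds $M(t/a;\delta,\pi,\gamma_a) \leq 2M(t;\delta,\pi,\gamma) \leq \ep$; both are valid.
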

\begin{proof}
Recall the bound $M(t;\delta,\pi,\gamma)$ on the right-hand side
of \eqref{Eq:HKS}.  If $\gamma_a \geq \frac{1}{2} \gamma a$,
then
\[
M(t/a; \delta_\gamma, \pi, \gamma_a) \leq
2 M(t; \delta, \pi, \gamma) \leq 2 \frac{\ep}{2} 
=\ep \,,
\]
and the lemma follows from applying Theorem \ref{Thm:HKS} to
the $P^a$-chain.  We now show that $\gamma_a \geq \frac{1}{2} \gamma
a$. 
Expanding $(1-\gamma)^a$, there exists $\xi \in [0, a^{-1}]$ 
such that
\begin{equation*}
  \gamma_a  = 1 - (1-\gamma)^a 
  = \gamma a - \frac{a(a-1)(1-\xi)^{a-2}\gamma^2}{2} 
  \geq \frac{\gamma a}{2} \,.
\end{equation*}
(We have used the hypothesis $a \gamma \leq 1$ in the inequality.)
\end{proof}

\begin{proof}[Proof of Proposition \ref{Prop:gammaA}]
Let $\delta_\gamma = \frac{\delta}{\lfloor \log_2(1/\gamma)\rfloor + 1}$. 
Fix $t > t_1(\ep/2; \delta_\gamma, \gamma)$.

Set $K_\gamma := \Bigl\lfloor \log_2\Bigl(\frac{1}{\gamma}\Bigr)
\Bigr\rfloor$, and let $\{X_s\}_{s=1}^t$ be $t$ steps of the Markov chain.
Consider the following algorithm:

Begin by setting $k=0$.  Let $a = 2^k$.   Using the ``skipped'' chain
$\{X_{as}\}_{s=1}^{t/A}$ observed for $t/A$ steps, form the HKS
estimator $\widehat{\gamma_a}$ of the spectral gap $\gamma_a$ of the
skipped chain.  If $\widehat{\gamma_A} > 0.31$ then set $A = a = 2^k$,
and stop.  Otherwise, increment $k$ and repeat.


Define the event $G = G(\ep)\deq \bigcap_{k=0}^{K_\gamma} G(2^k; \ep)$.
If $k \leq K_\gamma$, then $\gamma 2^k \leq \gamma 2^{\log_2(1/\gamma)} \leq 1$
and Lemma \ref{Lem:agamma} implies that
\[
\P(G^c) \leq \sum_{k=0}^{K_\gamma} \P(G(2^k; \ep)^c) \leq (K_\gamma+1)
 \frac{\delta}{K_\gamma + 1} = \delta \,.
\]

On $G$, if $\gamma \geq 1/2$, then 
$|\widehat{\gamma} - \gamma| < 0.01$, and
consequently $\hat{\gamma} \geq 0.49 > 0.31$.
In this case, $A=1$ on $G$.


On the event $G$, if the algorithm has not terminated by step $k-1$, 
then:
\begin{itemize}
  \item If $\gamma_{2^k} \leq 0.30$, then the algorithm does not
    terminate at step $k$;
  \item if $\gamma_{2^k} > 0.32$, then the algorithm terminates at
    step $k$.
\end{itemize}
Also, 
\[
\gamma_{2^{K_\gamma}} \geq 1 - (1-\gamma)^{\frac{1}{2\gamma}}
\geq  1 - e^{-1/2} \geq 0.39 \,,
\]
so the algorithm always terminates before $k = K_\gamma$ on $G$.

Finally, on $G$, if $A > 1$, then $\gamma_{A/2} \leq 0.32$, whence
\[
\gamma_A = 1 - (1-\gamma_{A/2})^2
\leq  1 - (0.68)^2 < 0.54 \,.
\]
If $\gamma < 1/2$ and $A = 1$, then  $\gamma_A = \gamma \leq 1/2$.
\end{proof}

\begin{proof}[Proof of Theorem \ref{thm:simplemain}]

For $C_0 = 23232\cdot C$, where $C$ is the constant in \eqref{Eq:HKS},
let
  \begin{equation} \label{Eq:t0}
  t_0(\ep; \delta, \gamma, \pi_\star) = t_0(\ep) := 
  \Biggl( \frac{1}{\pi_\star \gamma} \Biggr) \frac{C_0}{\ep^2 }
  {\mathcal L}
    \,,
  \end{equation}
where
\begin{equation} \label{Eq:LDef}
    {\mathcal L} = 
    \log\Bigl(\frac{C_0 \bigl( \bigl\lfloor \log_2(1/\gamma)
    \bigr\rfloor+ 1 \bigr)}{\ep^2 \pi^2_\star \gamma
    \delta} \Bigr) \log\Biggl(\frac{n (\bigl\lfloor \log_2(1/\gamma)
    \bigr\rfloor + 1 )}{\delta}\Biggr)  \,.
\end{equation}

Fix $t > t_0(\ep) = t_1(\ep/44; \delta_\gamma, \gamma)$.
Let $A$ and $G$ be as defined in Proposition
\ref{Prop:gammaA}.  Assume we are on the event $G = G(\ep/22)$ for the
rest of this proof.

Suppose first that $\gamma < 1/2$.
We have $0.30 < \gamma_A < 0.54$, and
\[
|\widehat{\gamma_A} - \gamma_A| < \frac{\ep}{22} < 0.01 \,,
\]
so both $\gamma_A$ and $\widehat{\gamma_A}$ are in $[0.29, 0.55]$,
say. 

Let $h(x) = 1 - (1-x)^{1/A}$, so $\gamma = h(\gamma_A)$.  Note that
on $[0.29,0.55]$,
\[
\frac{d}{dx} \log h(x) 
= \frac{1}{1 - (1-x)^{1/A}} \frac{1}{A} (1-x)^{1/A - 1}
\leq \frac{1}{A[1 - (1-0.29)^{1/A}]} \frac{1}{0.45} \,.
\]
Since $(1-x)^{1/A} \leq 1 - x/A + x^2/(2A^2)$,
\[
\frac{d}{dx} \log h(x)
\leq \frac{1}{(0.29 - 0.29^2/2)(0.45)} < 11 \,.
\]
Thus, $|\frac{d}{dx} \log h(x)|$ is bounded (by $11$) on $[0.29,0.55]$.
Write $\tilde{\gamma} = h(\widehat{\gamma_A})$.
We have
\[
|\log( h(\widehat{\gamma_A})/\gamma )| =
|\log h(\gamma_A) - \log h(\widehat{\gamma_A}) | \leq 11 
|\gamma_A - \widehat{\gamma_A}| \leq 11 \frac{\ep}{22}
\leq \frac{\ep}{2} \,.
\]
Thus,
\[
\frac{h(\widehat{\gamma_A})}{\gamma} \leq e^{ \ep/2} 
\leq 1 +  \ep \,.
\]
Similarly, $\frac {\gamma}{h(\widehat{\gamma_A})} \leq e^{\ep/2}$, so
\[
\frac{h(\widehat{\gamma_A})}{\gamma} \geq e^{- \ep/2} \geq 1 -  \ep
\,.
\]
Suppose that $\gamma \geq 1/2$.  Then $A = 1$ on the event
$G$, and 
\[
|\widehat{\gamma} - \gamma | < \frac{\ep}{22} \,,
\]
so
\[
\left| \frac{\widehat{\gamma}}{\gamma} - 1 \right| <
\frac{\ep}{22 \gamma} \leq  \ep \,.
\]
\end{proof}

\begin{rmk}
  If $t < t_0(\ep)$, then our estimation procedure is
  not guaranteed to produce a sensible estimate.
\end{rmk}

\section*{Acknowledgements}

We thank Jian Ding for helpful conversations on this topic.

\bibliographystyle{plain}

\end{document}